\documentclass{article}
\usepackage[utf8]{inputenc}
\usepackage{mathtools}
\usepackage{amsthm}
\usepackage{amssymb}
\usepackage{mathrsfs}
\usepackage{geometry}
\usepackage[colorlinks,citecolor=blue]{hyperref}
\usepackage{todonotes}
\usepackage{cite}
\newtheorem{theorem}{Theorem}
\newtheorem{lemma}{Lemma}
\newtheorem{remark}{Remark}
\newtheorem{definition}{Definition}
\newtheorem{corollary}{Corollary}
\newtheorem{prop}{Proposition}
\numberwithin{equation}{section}

\title{First eigenvalue of embedded minimal surfaces in $S^3$  }  

\author{Yuhang Zhao\footnote{Y.Zhao:yuhangzhao@smail.nju.edu.cn}\\  $^{\ast \dagger }$Department of Mathematics,  Nanjing University, Nanjing 210093, P.R.China}
\date{}
\begin{document}

\maketitle
\begin{abstract}
We prove that for an embedded minimal surface $\Sigma$ in $S^3$, the first eigenvalue of the Laplacian operator $\lambda_1$ satisfies $\lambda_1\geq 1+\epsilon_g$, where $\epsilon_g>0$ is a constant depending only on the genus $g$ of $\Sigma$. This improves previous result of Choi-Wang \cite{ChW}.    
\end{abstract}

\section {Introduction.}

Let $X:\Sigma^n \rightarrow S^{n+1}$ be a minimal immersion, where $\Sigma$ is compact. It's well known that any coordinate function of $\mathbb{R}^{n+2}$ restricts to an eigenfunction of the Laplacian operator of $\Sigma$ with eigenvalue $n$. In particular, this implies that the first eigenvalue (of the Laplacian) of $\Sigma$ is smaller than or equal to $n$.

In \cite{Yau}, S.T.Yau raised the conjecture that {\em ``The first eigenvalue of any  compact  embedded minimal surfaces in $S^{n+1} $ is $n$''}. In \cite{ChW}, Choi and Wang made the first breakthrough.They used Reilly's formula to get $\lambda_1\geq \frac{n}{2}$. In fact,observed by Xu-Chen-Zhang -Chen \cite{Xu},  a  simple modification of their argument implies  $\lambda_1 >\frac{n}{2} $. Despite many results in special cases, for example \cite{ChS}\cite{TangYan}, Choi-Wang's theorem remains the best result concerning Yau's conjecture at present.

In this paper, we shall study the $n=2$ case of Yau's conjecture. Since the genus $g=0$ and $g=1$ embedded minimal surfaces are equator $S^2$ (Calabi \cite{Ca}) and the Clifford torus (Brendle \cite{Bre}) respectively, Yau's conjecture is true by direct computations in these cases. Moreover, Choe and Soret \cite{ChS} showed that $\lambda_1 = 2$ for embedded minimal surfaces in $S^3$ which are invariant under a finite group of reflections. In particular, Yau's conjecture also holds for the minimal surfaces constructed by Lawson \cite{Law}. For general minimal surfaces in $S^3$ with $g\geq 2$, not much is known.

In this paper, we shall prove:
\begin{theorem}[Main Theorem]\label{thm:main}
The first eigenvalue $\lambda_1$ of an embedded minimal surface $\Sigma$ in $S^3$ satisfies:
$$\lambda_1 \geq 1+\epsilon_g,$$ 
where $\epsilon_g>0$ is a constant depending only on the genus $g$.
\end{theorem}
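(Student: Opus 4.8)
The plan is to deduce the quantitative gap from the \emph{qualitative} strict inequality $\lambda_1>1$ (the Choi--Wang estimate \cite{ChW}, sharpened by Xu--Chen--Zhang--Chen \cite{Xu}) by a compactness-and-contradiction argument. Fix $g$ and set $\mu_g:=\inf\{\lambda_1(\Sigma): \Sigma\subset S^3\ \text{closed embedded minimal},\ \mathrm{genus}(\Sigma)\le g\}$. Since $\lambda_1(\Sigma)>1$ for every such $\Sigma$ we have $\mu_g\ge 1$, and the theorem asserts precisely that $\mu_g>1$. Suppose instead $\mu_g=1$. Then there is a sequence $\Sigma_j$ of closed embedded minimal surfaces of genus $\le g$ with $\lambda_1(\Sigma_j)\to 1$, and after passing to a subsequence I may assume they all have the same genus $g_0\le g$.

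The core is the compactness step. Such surfaces have uniformly bounded area --- for instance $\mathrm{Area}(\Sigma)\le 8\pi(g+1)$, from $\lambda_1(\Sigma)\ge 1$ and the Yang--Yau inequality --- so by the Choi--Schoen compactness theorem, after passing to a further subsequence, $\Sigma_j\to\Sigma_\infty$ in $C^\infty$ for some closed embedded minimal surface $\Sigma_\infty$. Next I would verify that this convergence has multiplicity one. If the multiplicity were $m\ge 2$, then for large $j$ the surface $\Sigma_j$ would be an $m$-sheeted normal graph over $\Sigma_\infty$; since the $m$ sheets are disjoint they are globally ordered by signed distance to $\Sigma_\infty$, so each sheet is a connected component of $\Sigma_j$ and $\Sigma_j$ is disconnected --- contradicting that $\Sigma_j$ is a closed surface of genus $g_0$. (Equivalently, multiplicity $\ge 2$ would yield a positive Jacobi field on $\Sigma_\infty$, which is impossible because $\mathrm{Ric}_{S^3}\equiv 2>0$ forces every closed minimal surface in $S^3$ to be strictly unstable.) Multiplicity-one $C^\infty$ convergence then gives diffeomorphisms $\Phi_j:\Sigma_\infty\to\Sigma_j$ with $\Phi_j^\ast g_{\Sigma_j}\to g_{\Sigma_\infty}$ in $C^\infty$; in particular $\Sigma_\infty$ has genus $g_0\le g$.

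Finally I would pass to the limit. The first nonzero Laplace eigenvalue varies continuously with the metric in the $C^\infty$ topology, so $\lambda_1(\Sigma_\infty)=\lim_j\lambda_1(\Sigma_j)=1$. But $\Sigma_\infty$ is itself a closed embedded minimal surface in $S^3$, so $\lambda_1(\Sigma_\infty)>1$ by the sharpened Choi--Wang inequality --- a contradiction. Hence $\mu_g>1$, and one may take $\epsilon_g:=\mu_g-1$.

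I expect the main obstacle to be the compactness step: one must invoke Choi--Schoen in a form that genuinely yields $C^\infty$ convergence \emph{of the induced metrics, with multiplicity one}, so that the eigenvalue continuity at the end is valid; this rests on the uniform curvature and area estimates for embedded minimal surfaces of bounded genus in $S^3$ and on the exclusion of sheeting above. I would also note that the argument is ineffective --- it gives no explicit $\epsilon_g$. Obtaining an explicit constant would instead require controlling, in terms of $g$, the defect term $\int_{\Omega_1}|\nabla^2 f|^2+\int_{\Omega_2}|\nabla^2 h|^2$ in Reilly's formula (with $\Sigma=\partial\Omega_1=\partial\Omega_2$ and $f,h$ the harmonic extensions to the two sides of a first eigenfunction on $\Sigma$), which looks considerably harder.
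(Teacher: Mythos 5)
Your proposal is a correct proof, but it takes a genuinely different route from the paper. You argue by compactness-and-contradiction: the strict inequality $\lambda_1>1$ from the Choi--Wang/Xu et al.\ argument, combined with the Choi--Schoen compactness theorem (which yields $C^\infty$ subsequential limits with multiplicity one, for the reasons you give: two-sidedness in $S^3$ orders the sheets, and positive ambient Ricci curvature rules out a positive Jacobi field), forces $\inf\lambda_1$ over genus-$\le g$ embedded minimal surfaces to be attained and hence strictly above $1$; the continuity of the first nonzero Laplace eigenvalue under $C^\infty$ metric convergence closes the loop. This is sound, and shorter. The paper instead works directly with the Reilly identity defect: it shows that the normal injectivity radius $r_0$ of $\Sigma$ in either complementary domain equals $\cot^{-1}(\lambda_{\max})$ exactly (a rigidity statement proved by a focal/cut-point argument, not just an inequality), then uses a cutoff in the collar $\{l\le \epsilon r_0\}$ together with a refined Bochner estimate $|D^2u|^2\ge\tfrac32|\nabla|\nabla u||^2$ and a lower bound for the first Steklov eigenvalue of the inner region $\{l\ge\epsilon r_0\}$ (via Colbois--El Soufi--Girouard on the product cylinder) to deduce the explicit inequality $\lambda_1\ge 1+B r_0^6$ with $B>0$ universal. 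The trade-off is exactly the one you flag yourself: your argument is ineffective and gives no explicit $\epsilon_g$, whereas the paper's makes the dependence explicit through $r_0$ (hence through the genus-dependent curvature bound $\lambda_{\max}\le C(g)$ of Choi--Schoen), which is the substantive point of the paper. Note that both routes ultimately rest on Choi--Schoen for the genus dependence; yours uses their compactness theorem, the paper's uses their curvature estimate.
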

 
Our proof starts in the same way as Choi and Wang \cite{ChW}, namely we solve the Dirichlet problem 
$$\Delta u=0 \ \text{in}\ \Omega,\quad u=f\ \text{on}\ \partial\Omega,$$
where $\Omega$ is one of the two domains bounded by $\Sigma$ and $f\in C^\infty(\Sigma)$ satisfies $\Delta_\Sigma f=-\lambda_1 f$. As Choi and Wang, we use the Reilly formula and integrate by parts. To improve, we need to carefully analyze the Hessian term $\int_\Omega |D^2 u|^2$, which is simply thrown aways in \cite{ChW}.  
 
One distinguishing property of embedding minimal surfaces among minimal immersions is that they alway have tubular neighborhoods, obtained from normal exponential maps. Let $N\Sigma$ be the normal bundle of $\Sigma$ in $S^3$, endowed with induced metric. In this case, $\Sigma$ divides $S^3$ into two connected components and  $N\Sigma$ is a trivial bundle. Fix one component and call it $\Omega$. The inner pointing unit normal vector field along $\partial\Omega=\Sigma$ is denoted by $\mathbf{n}$. Let $r_0$ be the supremum of all $r>0$ such that the normal exponential map $exp: N\Sigma\to S^3$ is a diffeomorphism on $\{(p, s\mathbf{n}(p))\in N\Sigma\ |\ s<r\}$\footnote{Its image is usually called a ``collar neighborhood of $\partial\Omega$".}. We call $r_0$ the {\em ``normal injectivity radius of $\Sigma$ in $\Omega$".} A key step in our proof is to get a uniform lower bound for $r_0$. In fact, we will prove that $r_0=\cot^{-1}\lambda_{max}$, where $\lambda_{max}$ is the maximum of the (positive) principal curvature. Since Choi and Schoen\cite{ChSch} proved that $\lambda_{max}$ has a uniform upper bound depending only on the genus $g$, $r_0$ has a uniform lower bound depending only on $g$. So we need only to bound $\lambda_1$ from below using $r_0$.

The second main ingredient of our proof is the use of Steklov eigenvalue for annulus regions within the collar neighborhood. We use the estimate of Colbois, Soufi, and Girouard \cite{CSG} for Steklov eigenvalues of $\Sigma\times [a, b]$ with product metric to bound the first Steklov eigenvalue of suitable  annulus regions in $\Omega$, from which our main theorem follows.Note that the idea of using the Dirichlet-to-Neumann map also appeared in \cite{Barros}.

In the next section, some preliminary calculations on minimal surfaces in $S^3$ using complex analytic charts are briefly reviewed. Then in section 3, we bound $r_0$. Though some results hold in the more general setting, we only prove the $S^3$ case, where everything can be written explicitly. Though we mainly concentrate in the minimal case, we prove the estimate of $r_0$ in the general mean convex setting. This result may be of some independent interest. Finally, we prove the main theorem in section 4 using Steklov eigenvalues.

{\bf Acknolwdgement:} The author is grateful to professor Yalong Shi for suggesting this problem and for helpful discussions.He also thanks professor Aldir Brasil for bringing \cite{Barros} to his sight.

\section{Preliminaries}

Let $X:\Sigma^2 \rightarrow S^3 $ be an  immersion. It's well known that 
the induced metric $X^{*} g_{S^3} $ determines uniquely a conformal structure.
If we choose a holomorphic coordinate system  $z =x+iy $ with respect to the conformal structure, the induced metric can be written as 
$$X^{*} g_{S^3} =F(dz\otimes d\overline{z} +d\overline{z}\otimes dz)=2F(dx\otimes dx+dy\otimes dy).$$
It is direct to check that we have the following identities:
\begin{equation}\label{eqn:basic}
\begin{aligned}
X \cdot X=1,\qquad X \cdot \frac{ \partial X }{\partial z  }&=0, \qquad X \cdot \frac{ \partial X }{\partial {\bar{z}}  }=0\\
\frac{\partial {X} }{ \partial {z} } \cdot \frac{\partial {X} }{ \partial z }=0,\quad \frac{ \partial {X} }{\partial {z}  } \cdot \frac{ \partial {X} }{\partial {\overline{z}}  }&=F,
\quad\frac{\partial^2X}{\partial {z}^2} \cdot \frac{\partial X}{\partial z}=0.
\end{aligned}
\end{equation}
We denote by $\bf n$ the unit normal vector field of $\Sigma$ and  use $H$  to represent the mean curvature of $\Sigma$ with respect to $\bf n$. Then the two principal curvatures of $\Sigma$ can be written as 
$H+\lambda ,H-\lambda$, where $\lambda\geq 0$ is a continuous  function on $\Sigma$ defined by $\lambda =\sqrt{H^2+1-K}$, where $K$ is the Gaussian curvature.

 At any point $q$, we can choose proper coordinate system, still denoted by $z=x+iy$, such that
\begin{equation}\label{eqn:derived}
\begin{aligned}
\mathbf{n}_x (q) &=(-H- \lambda X_x )(q),\quad \mathbf{n}_y (q)=(-H+\lambda X_y) (q),\\
X_{xx} (q) &=-2F(q)X(q)+\frac{1}{2F} (q)\frac{\partial {F} }{\partial x} (q) X_x (q)-\frac{1}{2F} (q) \frac{\partial {F} }{\partial y} (q) X_y (q)+2F(q)(H+\lambda ) (q) \mathbf{n}(q),\\
X_{yy} (q) &=-2F(q)X(q)+\frac{1}{2F} (q)\frac{\partial {F} }{\partial y} (q) X_y (q)-\frac{1}{2F} (q)\frac{\partial {F} }{\partial x} (q) X_x(q)-2F(q)(H-\lambda) (q) \mathbf{n}(q),\\
X_{xy} (q) &=\frac{1}{2F} (q)\frac{\partial {F} }{\partial y} (q) X_x (q)+\frac{1}{2F} (q) \frac{\partial {F} }{\partial x} (q) X_y (q),
\end{aligned}
\end{equation}
where the last three identities follows easily by differentiating \eqref{eqn:basic}.

By the Gauss-Codazzi equation, we have 
$$K=1+(H+\lambda)(H-\lambda).$$
Hence if $\Sigma$ is compact, by Gauss-Bonnet formula, we have 
$$\int_{\Sigma} (1+(H+\lambda)(H-\lambda) )d\sigma =4\pi (1-g),$$
where $g$ is the genus of $\Sigma$.
 
From this, we immediately get:
 \begin{lemma}\label{lem:lambda_max}
If $\Sigma$ is compact and $g\geq 1$, for immersion  $\Sigma^2 \rightarrow S^3$, there exits a point $p\in \Sigma $ such that $(H+\lambda)(p)\geq 1$ and $H(p)\geq 0$ or  $(H-\lambda )(p)\leq -1$ and $H(p)\leq 0$.
 \end{lemma}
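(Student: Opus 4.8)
The plan is to combine the Gauss--Bonnet identity displayed just above with an elementary pointwise argument. First I would rewrite the Gauss equation as $(H+\lambda)(H-\lambda)=K-1$ and integrate: since $\int_\Sigma K\,d\sigma=4\pi(1-g)$, we get
\[
\int_\Sigma (H+\lambda)(H-\lambda)\,d\sigma \;=\; 4\pi(1-g)-\operatorname{Area}(\Sigma)\;\le\;-\operatorname{Area}(\Sigma),
\]
because $g\ge 1$ forces $4\pi(1-g)\le 0$. Dividing by $\operatorname{Area}(\Sigma)>0$, the average of the continuous function $(H+\lambda)(H-\lambda)=K-1$ over $\Sigma$ is at most $-1$, so there is a point $p\in\Sigma$ with $(H+\lambda)(H-\lambda)(p)\le -1$.

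Next I would argue pointwise at such a $p$. Since $(H+\lambda)(H-\lambda)(p)$ is strictly negative and $\lambda\ge 0$ (so $H+\lambda\ge H-\lambda$ everywhere), the two factors must straddle zero: $(H+\lambda)(p)>0>(H-\lambda)(p)$, and hence $(H+\lambda)(\lambda-H)(p)\ge 1$ with both factors positive. Then I split on the sign of $H(p)$. If $H(p)\ge 0$, then $0<(\lambda-H)(p)\le(\lambda+H)(p)$, so $(H+\lambda)^2(p)\ge(H+\lambda)(\lambda-H)(p)\ge 1$, giving $(H+\lambda)(p)\ge 1$ together with $H(p)\ge 0$ --- the first alternative. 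If $H(p)\le 0$, then $0<(\lambda+H)(p)\le(\lambda-H)(p)$, so $(\lambda-H)^2(p)\ge(H+\lambda)(\lambda-H)(p)\ge 1$, giving $(H-\lambda)(p)\le -1$ together with $H(p)\le 0$ --- the second alternative. (When $H(p)=0$ both alternatives hold, with $\lambda(p)\ge 1$.)

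There is no real obstacle here: the only thing to be careful about is the bookkeeping of signs, namely that $\lambda\ge 0$ forces the principal curvatures $H\pm\lambda$ to have opposite signs at $p$, which is precisely what lets one bound the larger factor in absolute value below by $1$. Everything else is just the Gauss--Bonnet computation already recorded in the text together with the mean value observation.
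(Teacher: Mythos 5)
Your proposal is correct and follows essentially the same route as the paper: use the Gauss--Bonnet identity to produce a point $p$ with $(H+\lambda)(H-\lambda)(p)\le -1$, then use $\lambda\ge 0$ and a case split on the sign of $H(p)$ to extract the stated inequality. The only difference is that you spell out the elementary algebraic step ($(H+\lambda)^2\ge(H+\lambda)(\lambda-H)\ge 1$ when $H\ge 0$, and symmetrically) that the paper leaves implicit with ``because $\lambda(q)\ge 0$, we have \dots''.
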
 
 \begin{proof}
If $g\geq 1$, then from the above formula, we get 
$$\int_{\Sigma} (1+(H+\lambda)(H-\lambda) )d\sigma \leq 0.$$
Then there exists a point $p$ such that $$(H+\lambda)(H-\lambda)(p) \leq -1.$$
If $H(p) \geq 0$,then because  $\lambda(q)\geq 0$, we have $$(H+\lambda)(p)\geq 1,\quad (\lambda-H)(p) >0.$$
The $H(p) \leq 0$ case is similar.
\end{proof}
 
\section{Cut point of $\Sigma$ in $S^3$ and its properties} 

In the following sections, the surface $\Sigma$ is always assumed to compact.
 If $X:\Sigma \rightarrow S^3 $ is an embedding, then we can identify $\Sigma $ with its image $X(\Sigma)$, and $X(\Sigma) $ divides $S^3$ into two bounded connected components: $\Omega_1,\Omega_2$.
We use $\mathbf n$  to represent the inner normal vector field of $\Sigma$ with respect to $\Omega_1 $, then the inner normal vector field of $\Sigma$ with respect to $\Omega_2$ is $-\mathbf n$. We use $\rho_{S^3} $ to denote the distance function of $S^3$.

For $\Omega_1$ or $\Omega_2$, we know that there exists $r'>0$ such that for any point $q\in \Sigma$, and for any point $\exp_{q} (d\mathbf{n}(q))$ or $\exp_{q} (-d\mathbf{n}(q)) (0\leq d\leq r')$,we have $\rho_{S^3} (\exp_{q} (d\mathbf{n}(q)),\Sigma)=\rho_{S^3} (exp_{q} (-d\mathbf{n}(q)),\Sigma)=d$.
 
\begin{definition}
For any point $q\in \Sigma$, let $r(q)=sup \{d\geq 0| \rho_{S^3}(\exp_{q} (d\mathbf{n}(q)),\Sigma)=d \}$. Then $r(q)$ is called the distance of the cut point of $q$ in $\Omega_1$.
 \end{definition}
 
 Since any geodesic in $S^3$ can return to its origin when it passes through the distance $2\pi$, we immediately have
 $\rho_{S^3}(\exp_{q} (r(q)\mathbf{n}(q)),\Sigma)=r(q) \in (0,\pi)$.
 By the  first variation formula of the distance function, we have : 
 \begin{lemma}
 $$\frac{ \cos r(q)  }{\sin r(q)} \geq  (H+ \lambda) (q) ,\quad\forall q \in \Sigma .$$ 
 \end{lemma}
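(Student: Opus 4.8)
The plan is to extract the inequality from the second variation of the distance to the cut point. Write $r=r(q)$ and let $p=\exp_q(r\,\mathbf n(q))$ be the cut point of $q$ in $\Omega_1$. As noted just above, $\rho_{S^3}(p,\Sigma)=r\in(0,\pi)$; since $q\in\Sigma$ this forces $\rho_{S^3}(p,q)=r$, so the unit speed geodesic $t\mapsto\exp_q(t\,\mathbf n(q))$, $t\in[0,r]$, realizes $\rho_{S^3}(p,q)$ and leaves $q$ in the direction $\mathbf n(q)$. Because $r\in(0,\pi)$, the point $q$ is neither $p$ nor its antipode, hence $\rho_p:=\rho_{S^3}(p,\cdot)$ is smooth near $q$, with $\nabla\rho_p(q)=-\mathbf n(q)$, and, by the standard formula for the Hessian of the distance function in $S^3$,
$$\mathrm{Hess}\,\rho_p=\cot(\rho_p)\bigl(g_{S^3}-d\rho_p\otimes d\rho_p\bigr)$$
wherever $\rho_p$ is smooth.

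I would then pick a unit vector $v\in T_q\Sigma$ in the principal direction with principal curvature $H+\lambda$ and let $c\colon(-\varepsilon,\varepsilon)\to\Sigma$ be a geodesic of $\Sigma$ with $c(0)=q$, $\dot c(0)=v$. Since $c(s)\in\Sigma$, the smooth function $L(s):=\rho_p(c(s))$ satisfies $L(s)\ge\rho_{S^3}(p,\Sigma)=r=L(0)$, so it has an interior minimum at $s=0$ and $L''(0)\ge 0$. Now $L''(0)=\mathrm{Hess}\,\rho_p(v,v)+\langle\nabla\rho_p,\nabla^{S^3}_{\dot c}\dot c\rangle|_{s=0}$; the first term equals $\cot r$ because $v\perp\mathbf n(q)=-\nabla\rho_p(q)$, while, $c$ being an intrinsic geodesic, $\nabla^{S^3}_{\dot c}\dot c|_{0}$ is purely normal and equals $\langle\nabla^{S^3}_v v,\mathbf n(q)\rangle\,\mathbf n(q)=(H+\lambda)(q)\,\mathbf n(q)$ (using that the principal curvatures of $\Sigma$ with respect to $\mathbf n$ are $H\pm\lambda$, cf.\ \eqref{eqn:derived}), so the second term is $-(H+\lambda)(q)$. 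Hence $0\le L''(0)=\cot r-(H+\lambda)(q)$, which is precisely $\tfrac{\cos r(q)}{\sin r(q)}\ge(H+\lambda)(q)$.

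Two comments. Running the same computation with an arbitrary unit $v\in T_q\Sigma$ shows that every principal curvature of $\Sigma$ at $q$ is at most $\cot r(q)$; equivalently, $\Sigma$ touches the geodesic sphere $\partial B_{S^3}(p,r)$ at $q$ from outside, so $r(q)$ never exceeds the first focal distance $\cot^{-1}\!\bigl((H+\lambda)(q)\bigr)$ of $\Sigma$ along the normal geodesic from $q$ — which already foreshadows the sharp identity $r_0=\cot^{-1}\lambda_{max}$ promised in the introduction. The argument is short, and I expect the only delicate points to be bookkeeping: verifying that $\rho_p$ is smooth at $q$ (this is exactly where $r(q)<\pi$ enters) and keeping the sign conventions for the shape operator and for $\mathrm{Hess}\,\rho_p$ consistent. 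The genuinely hard work lies later — promoting this pointwise estimate to a uniform lower bound for $r_0$ and then to a lower bound for $\lambda_1$ via Steklov eigenvalues.
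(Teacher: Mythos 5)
Your proof is correct and takes essentially the same route as the paper: both exploit that the distance to the cut point $p=\exp_q(r(q)\,\mathbf n(q))$, restricted to $\Sigma$, is minimized at $q$, and read off the inequality from the second-order condition in the principal direction with curvature $H+\lambda$. The paper phrases the computation extrinsically via the $\mathbb R^4$ inner product $\langle\cos r\,X(q)+\sin r\,\mathbf n(q),X(x,y)\rangle=\cos\rho_p$ together with the explicit formula for $X_{xx}$, whereas you use the intrinsic Hessian of $\rho_p$ on $S^3$; it is the same computation in different notation.
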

   
   \begin{proof}
   
   By definition,we know the function $\langle\cos r(q) X(q)+\sin r(q) \mathbf{n}(q) ,X(x,y)\rangle $ achieves its maximum   at $q$,
where $\langle,\rangle$ represents the standard inner product in $\mathbb{R}^4$.
Hence $$\langle\cos r(q) X(q)+\sin r(q) \mathbf{n}(q) ,X_{xx} (q)\rangle\leq 0.$$
  By $(2.1)$,we have $$ -\cos r(q) +(H+\lambda) (q) \sin r(q) \leq  0 ,$$ 
  that is,$$\frac{ \cos r(q)  }{\sin r(q)} \geq  (H+ \lambda) (q) .$$ 
   \end{proof}
  
In fact, the point $\exp_{q} (\cot^{-1} ((H+\lambda) (q) )\mathbf{n}(q) ) $ is the first focal point of $\Sigma$ in the usual sense.
 
 The following proposition can be proven by modifying standard arguments in Riemannian geometry. Hence we omit the details.
 
 \begin{prop}\label{prop:cut}
  $r(q)$ is  a continuous  function defined on $\Sigma $,hence it can achieve its maximum and minimum.And for any point $q\in \Sigma$,if the point $\exp_{q} (r(q) \mathbf{n}(q))$ is not the first focal point,then there exists another point $q'\neq q$ such that 
  $$r(q')=r(q),\exp_{q} (r(q) \mathbf{n}(q))=\exp_{q'} (r(q') \mathbf{n}(q')).$$
\end{prop}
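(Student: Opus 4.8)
The plan is to mimic the proof that the cut locus of a submanifold in a complete Riemannian manifold is well-behaved, applied here to the ``one-sided'' collar exponential map $\Phi:\Sigma\times[0,\pi)\to\overline{\Omega_1}$, $\Phi(q,d)=\exp_q(d\mathbf{n}(q))$. The key structural facts are: (i) for $d<r(q)$ the curve $s\mapsto\exp_q(s\mathbf{n}(q))$ is the unique minimizing geodesic from $\exp_q(d\mathbf{n}(q))$ to $\Sigma$ and meets $\Sigma$ orthogonally at $q$; and (ii) $r(q)$ is the first value of $d$ at which minimality fails, which happens either because $\exp_q(d\mathbf{n}(q))$ is a focal point of $\Sigma$ along that geodesic, or because there is a second minimizing normal geodesic reaching it from another point of $\Sigma$. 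Since the lemma already gives $r(q)\le\cot^{-1}((H+\lambda)(q))<\pi$, and since $\Sigma$ is compact and embedded, the normal geodesics in question stay away from the cut locus of points in the usual sense up to the relevant distance, so all the standard machinery applies verbatim in $S^3$.

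For \emph{continuity} I would argue as follows. Lower semicontinuity: if $q_i\to q$ and $r(q_i)\to L$, then for any $d<L$ eventually $d<r(q_i)$, so $\exp_{q_i}(d\mathbf{n}(q_i))$ realizes distance $d$ to $\Sigma$; passing to the limit, $\exp_q(d\mathbf{n}(q))$ realizes distance $d$ to $\Sigma$, hence $d\le r(q)$, giving $L\le r(q)$ — wait, this is \emph{upper} semicontinuity, $\limsup r(q_i)\le r(q)$; I have the inequality direction as: continuity of $\rho_{S^3}(\exp_{q_i}(d\mathbf{n}(q_i)),\Sigma)$ in $(q_i,d)$ forces $\rho_{S^3}(\exp_q(d\mathbf{n}(q)),\Sigma)=d$ for all $d\le\limsup r(q_i)$, so $r(q)\ge\limsup r(q_i)$. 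For the reverse, $\liminf r(q_i)\ge r(q)$: suppose not, so along a subsequence $r(q_i)\to \ell<r(q)$. The point $p_i:=\exp_{q_i}(r(q_i)\mathbf{n}(q_i))$ is a cut point, so it is either a focal point or admits a second minimizing normal geodesic from some $q_i'\in\Sigma$. Pass to a subsequence so that $p_i\to p=\exp_q(\ell\mathbf{n}(q))$ and $q_i'\to q'$. In the second case, by continuity $\exp_{q'}(\ell\mathbf{n}(q'))=p$ with $\rho_{S^3}(p,\Sigma)=\ell$; if $q'\ne q$ this already forces $r(q)\le\ell$ (two distinct minimizing normal geodesics of length $\ell<r(q)$ would contradict uniqueness), a contradiction; if $q'=q$, a standard first-variation/short-cut argument shows the geodesic from $p$ to $\Sigma$ is not minimizing slightly beyond, or $p$ is focal — either way $r(q)\le\ell$. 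In the focal case, the focal condition is closed (it is the vanishing of the Jacobian of $d\Phi$, explicitly $\det$ of a matrix built from $H,\lambda$ and the distance via the $S^3$ Jacobi fields $\cos d - (H\pm\lambda)\sin d$), so $p$ is focal along $s\mapsto\exp_q(s\mathbf{n}(q))$ at distance $\ell$, whence again $r(q)\le\ell$. This contradiction gives lower semicontinuity, and hence continuity; compactness of $\Sigma$ then yields attainment of max and min.

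For the \emph{second assertion}: fix $q$ with $p:=\exp_q(r(q)\mathbf{n}(q))$ not a focal point. Choose $q_i\in\Sigma$ and $d_i\downarrow r(q)$ with $\rho_{S^3}(\exp_{q_i}(d_i\mathbf{n}(q_i)),\Sigma)<d_i$ — possible by definition of $r(q)$ as a supremum — hmm, more carefully: take $p_j\to p$ with $\rho_{S^3}(p_j,\Sigma)$ strictly less than $\rho_{S^3}(p_j,\,s\mapsto\exp_q(s\mathbf{n}(q))\text{-foot})$, i.e. points just past $p$ along the normal ray whose nearest point on $\Sigma$ is \emph{not} $q$; let $q_j\in\Sigma$ be a nearest point to $p_j$, so $q_j\ne q$ and the minimizing geodesic from $p_j$ to $q_j$ meets $\Sigma$ orthogonally at $q_j$. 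Pass to a subsequence $q_j\to q'$; then $q'\ne q$ is impossible to rule out only if $q_j\to q$, but in that case the two short geodesics from $p_j$ (one along the fixed normal ray, one to $q_j$) would have initial directions at $p_j$ converging to each other, forcing, by an index-form/Jacobi-field argument, that $p$ is focal along $s\mapsto\exp_q(s\mathbf n(q))$ — contradicting our hypothesis. Hence $q'\ne q$, and by continuity $\rho_{S^3}(p,q')=r(q)$ with the geodesic from $p$ to $q'$ normal to $\Sigma$ at $q'$, i.e. $p=\exp_{q'}(r(q')'\mathbf n(q'))$ for the appropriate length; minimality up to $r(q)$ on both rays plus $r(q')\ge r(q)$ (since the $q'$-ray is minimizing up to length $r(q)$) and symmetrically $r(q)\ge r(q')$ — the latter because if $r(q')>r(q)$ the $q'$-ray would be strictly minimizing past $p$, contradicting that $p$ is a cut point of $q$ — yields $r(q')=r(q)$ and $\exp_q(r(q)\mathbf n(q))=\exp_{q'}(r(q')\mathbf n(q'))$.

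\textbf{Main obstacle.} The genuinely delicate point is the dichotomy ``cut point $\Rightarrow$ focal point or second footpoint,'' together with ruling out the degenerate case $q'=q$ in both parts: one must run the classical Riemannian argument (short-cut construction for the second-footpoint branch, second-variation/index-form positivity up to the first focal point for the no-second-footpoint branch) in the setting of distance-to-a-submanifold rather than distance-to-a-point. Everything else is continuity bookkeeping. Because $S^3$ is a space form with explicit Jacobi fields $\cos d - (H\pm\lambda)(q)\sin d$ along the normal ray, the focal-point condition is fully explicit and the index form is easy to estimate, so this is exactly where ``modifying standard arguments in Riemannian geometry'' does the work, which is why the paper is content to omit it.
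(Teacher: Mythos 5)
Your sketch is a correct rendering of the standard submanifold cut-locus argument (Klingenberg-style dichotomy: cut point is focal or admits a second minimizing normal geodesic; upper semicontinuity from continuity of the distance function, lower semicontinuity from closedness of the two alternatives), which is precisely the ``modifying standard arguments in Riemannian geometry'' the paper invokes while omitting details. The only organizational quibble is that the dichotomy you use inside the lower-semicontinuity step is the second assertion of the proposition, so it should logically be established first; and in $S^3$ the first focal distance is exactly $\cot^{-1}((H+\lambda)(q))$, so the closedness of the focal condition is immediate from continuity of $H+\lambda$, as you note.
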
 

Let $$r_0=min_{q\in \Sigma} r(q).$$
Clearly,$0<r_0\leq  \cot^{-1} ((H+\lambda)_{max} ) <\pi$. And we call $r_0$ the {\em normal injectivity radius of $\Sigma$ with respect to  $\Omega_1$}.

With these preparations, now we have
\begin{theorem}\label{thm:radius}
For  mean convex  embedding $X :\Sigma\rightarrow S^3$ (i.e. $H \geq 0$), we have 
$$r_0=\cot^{-1} ((H+\lambda)_{max} )\leq \frac{\pi}{2}.$$ Moreover, if the genus $g$ of $\Sigma$ satisfies $g\geq 1$, we have $r_0\leq \frac{\pi}{4}$.
\end{theorem}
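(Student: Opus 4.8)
The plan is to reduce everything to the lower bound $r_0\ge\cot^{-1}((H+\lambda)_{max})$, which combined with the already–known inequality $r_0\le\cot^{-1}((H+\lambda)_{max})$ yields the desired equality. Once this is in hand, the bound $r_0\le\frac{\pi}{2}$ follows at once from $(H+\lambda)_{max}\ge0$ (a consequence of $H\ge0$ and $\lambda\ge0$), and when $g\ge1$ one invokes Lemma~\ref{lem:lambda_max}: in the mean convex case both of its alternatives force the existence of a point with $H+\lambda\ge1$ (in the second alternative $H=0$ there, hence $\lambda\ge1$), so $(H+\lambda)_{max}\ge1$ and $r_0=\cot^{-1}((H+\lambda)_{max})\le\cot^{-1}1=\frac{\pi}{4}$.

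For the lower bound, I would choose $q_0$ with $r(q_0)=r_0$, set $p_0=\exp_{q_0}(r_0\mathbf{n}(q_0))$ and $\gamma_1(t)=\exp_{q_0}(t\mathbf{n}(q_0))$, and split into two cases. If $p_0$ is the first focal point of $\gamma_1$, then $r_0=\cot^{-1}((H+\lambda)(q_0))\ge\cot^{-1}((H+\lambda)_{max})$ and we are done. Otherwise Proposition~\ref{prop:cut} supplies $q'\ne q_0$ with $r(q')=r_0$ and $\exp_{q'}(r_0\mathbf{n}(q'))=p_0$; setting $\gamma_2(t)=\exp_{q'}(t\mathbf{n}(q'))$ and using that $r(q')=r_0$ is again minimal, we may as well assume $p_0$ is not the first focal point of $\gamma_2$ either (otherwise the first case applied at $q'$ closes the argument). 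The plan is then to show this configuration is impossible.

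The contradiction should come from the two parallel ``sheets'' meeting at $p_0$. For $0<t<r_0$ the parallel surface $\Sigma_t=\exp(\Sigma,t\mathbf{n})$ is embedded, since a double point would be a cut point at distance $t<r_0$, forcing $r_0\le t$; hence for disjoint neighbourhoods $U\ni q_0$, $U'\ni q'$ in $\Sigma$, the sheets $\Sigma_1(t)=\exp(U,t\mathbf{n})$ and $\Sigma_2(t)=\exp(U',t\mathbf{n})$ are disjoint for $t<r_0$ but both contain $p_0$ at $t=r_0$. A disjoint family cannot limit onto a transverse pair, so $\Sigma_1(r_0),\Sigma_2(r_0)$ are tangent at $p_0$, and since $q_0\ne q'$ the geodesics cannot leave $p_0$ in the same direction, so they meet head-on: $\dot\gamma_1(r_0)=-\dot\gamma_2(r_0)=:v_1$. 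Writing $\Sigma_i(t)$ as graphs $z=h_i^t$ over $v_1^\perp\subset T_{p_0}S^3$ (with $z$-axis $v_1$, the geodesics being the $z$-axis in normal coordinates), one gets $h_1^t(0)=t-r_0<0<r_0-t=h_2^t(0)$, so $h_1^t<h_2^t$ near $0$; letting $t\uparrow r_0$ and comparing Hessians at $0$ should give $S_1+S_2\le0$, where $S_i$ is the shape operator of $\Sigma_i(r_0)$ at $p_0$ with respect to the forward normal $\dot\gamma_i(r_0)$.

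To finish, I would use the Riccati equation $S'=S^2+\mathrm{Id}$ for the shape operator of parallel surfaces in $S^3$: starting from the principal curvatures $\kappa_1\ge\kappa_2$ of $\Sigma$ at $q_0$ (so $\kappa_1=(H+\lambda)(q_0)$), the operator $S_1$ acquires eigenvalues $\tan(r_0+\arctan\kappa_i)$. Since $p_0$ is not a focal point, $r_0<\cot^{-1}\kappa_i$, so $r_0+\arctan\kappa_i\in(-\tfrac{\pi}{2},\tfrac{\pi}{2})$, and mean convexity ($\kappa_1+\kappa_2=2H(q_0)\ge0$) gives $\arctan\kappa_1+\arctan\kappa_2\ge0$; hence
$$\mathrm{tr}\,S_1=\tan(r_0+\arctan\kappa_1)+\tan(r_0+\arctan\kappa_2)=\frac{\sin\big(2r_0+\arctan\kappa_1+\arctan\kappa_2\big)}{\cos(r_0+\arctan\kappa_1)\,\cos(r_0+\arctan\kappa_2)}>0,$$
because the denominator is positive and the argument of the sine lies in $(0,\pi)$ — it is $\ge 2r_0>0$ and $<\pi$ since each $\arctan\kappa_i<\tfrac{\pi}{2}-r_0$. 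The same computation at $q'$ yields $\mathrm{tr}\,S_2>0$, contradicting $S_1+S_2\le0$; so $p_0$ must be a focal point and $r_0=\cot^{-1}((H+\lambda)(q_0))\ge\cot^{-1}((H+\lambda)_{max})$. I expect the hardest part to be the geometric input of the third paragraph — that the two parallel sheets at $p_0$ must be tangent with opposite forward normals (a Klingenberg-type rigidity), together with making the sign bookkeeping in the Riccati comparison airtight; this is exactly where the non-focal hypothesis and the assumption $H\ge0$ are needed.
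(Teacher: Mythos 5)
Your proposal is correct in its essentials and, more interestingly, takes a genuinely different route to the key lower bound $r_0\ge\cot^{-1}((H+\lambda)_{\max})$ than the paper does. Both arguments begin identically: take $q_0$ realizing $r_0$, observe that under $r_0<\cot^{-1}((H+\lambda)_{\max})$ the point $p_0$ cannot be focal along $\gamma_1$, and invoke Proposition~\ref{prop:cut} to produce the second point $q'$ with the same cut point. Both then deduce that $\dot\gamma_1(r_0)=-\dot\gamma_2(r_0)$. From there the methods diverge. The paper works entirely with the ambient $\mathbb{R}^4$ inner-product functions $g_d,h_d$ and the explicit Hessian formulae \eqref{eqn:derived}; after a WLOG reduction $(H+\lambda)(q_1)\ge(-H+\lambda)(q_2)$ it extends the geodesic slightly past $2r_0$ and chooses a level $r'$ at which the Hessian of $g_{r'}$ at $q_1$ has a positive eigenvalue while that of $h_{r'}$ at $q_2$ is negative definite, then turns this into a contradiction with the minimality of $r_0$ by producing a nearby $q_1'$ at strictly smaller distance. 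You instead compare the two parallel sheets $\Sigma_1(r_0),\Sigma_2(r_0)$ directly at the tangent point $p_0$: disjointness of $\Sigma_t$ for $t<r_0$ (which follows exactly from $r_0=\min r$, as you argue — a double point would force $r(q)\le t<r_0$) plus the opposite forward normals give the one-sided graphical touching and hence $S_1+S_2\le0$; then the Riccati evolution $S'=S^2+\mathrm{Id}$ gives the eigenvalue formula $\tan(r_0+\arctan\kappa_i)$, and mean convexity plus the non-focal condition force $\mathrm{tr}\,S_1,\mathrm{tr}\,S_2>0$. I checked the sign bookkeeping (with the convention $S=-D_{\cdot}N$ that underlies the Riccati form $S'=S^2+\mathrm{Id}$, and using $\mathbf{n}_x=(-H-\lambda)X_x$ so that $S_\Sigma$ has eigenvalues $H\pm\lambda$) and it is consistent: the graph Hessian comparison and the trace computation both come out with the signs you claim. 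Your route is more conceptual and closer to the classical Klingenberg-type cut-locus rigidity; the paper's is more explicit but also more delicate, as it has to push past $2r_0$ and track several inequalities in $d$ and $r'$. One small remark: the inequalities $r_0\le\pi/2$ and (for $g\ge1$) $r_0\le\pi/4$ already follow from the easy upper bound $r_0\le\cot^{-1}((H+\lambda)_{\max})$ established before the theorem, so it is clean, as you do, to isolate the lower bound as the genuine content and obtain the equality at the end. The two places you flagged as delicate — the tangency of the sheets and the sign conventions — are indeed the places where one must be careful, but your outline handles them correctly.
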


\begin{proof}
We prove the theorem by contradiction.

First, let $$\Sigma_{r_0} =\{q'\in \Omega_1 | \rho_{S^3} (q',\Sigma)=r_0 \}=\{\exp_{q} (r_0 \mathbf{n}(q))=\cos r_0 X(q) +\sin r_0 \mathbf{n}(q)| q\in\Sigma \}.$$
Suppose $r_0<\cot^{-1} ( (H+\lambda)_{max} )$, by Proposition \ref{prop:cut}, we can find two points $q_1\neq q_2\in \Sigma $ such that $$r(q_1)=r(q_2)=r_0,\quad \exp_{q_1} (r_0 \mathbf{n}(q_1))=\exp_{q_2} (r_0 \mathbf{n}(q_2)).$$

We can choose coordinate systems of $q_1,q_2$ :$z_1=x_1+i y_1,z_2=x_2+iy_2$ satisfying \eqref{eqn:derived}, and corresponding metric coefficients are denoted by $F,F'$.

Since $$\rho_{S^3}  (q_1,\Sigma_{r_0} )=\rho_{S^3}  (q_2,\Sigma_{r_0})  =r_0
=\rho_{S^3} (q_1,\exp_{q_1} (r_0 \mathbf{n}(q_1)))=\rho_{S^3} (q_2,\exp_{q_1} (r_0 \mathbf{n}(q_1))),$$
by triangle inequality,
 $$\rho_{S^3} ( \cos r X(q_2)+\sin r \mathbf{n}(q_2),\Sigma_{r_0} )=r_0-r,\quad \forall r\in [0,r_0].$$
 That is, the function $\langle\cos r X(q_2)+\sin r \mathbf{n}(q_2),\cos r_0 X(x_1,y_1)+ \sin r_0 \mathbf{n}(x_1,y_1)\rangle$ achieves its maximum at $q_1$, $\forall r\in [0,r_0]$.

Hence by \eqref{eqn:derived},
\begin{align*}
\langle\cos r X(q_2)+\sin r \mathbf{n}(q_2),(\cos r_0 -\sin r_0 (H+\lambda )(q_1)) X_{x_1} (q_1) \rangle&=0,\\
\langle\cos r X(q_2)+\sin r \mathbf{n}(q_2), (\cos r_0 -\sin r_0 (H-\lambda )(q_1))X_{y_1} (q_1) \rangle &=0,\quad \forall r\in [0, r_0].
\end{align*}
Since $\frac{\cos r_0}{\sin r_0} >(H+\lambda) (q_1) $,
we have  $$\cos r X(q_2)+\sin r \mathbf{n}(q_2) \perp X_{x_1} (q_1),X_{y_1} (q_1) ,\quad\forall r\in [0, r_0].$$
Thus 
$$-\sin r_0 X(q_1)+\cos r_0 \mathbf{n}(q_1)=-(-\sin r_0 X(q_2)+\cos r_0 \mathbf{n}(q_2) )\quad  (q_1\neq q_2).$$
Hence there exists a  geodesic $\gamma(r)=\exp_{q_1} (r \mathbf{n}(q_1) ) $ with $\gamma(2r_0)=q_2$.

Notice that for very small $\delta>0$, we have  $\exp_{q_1} (d\mathbf{n}(q_1))\in \Omega_2$ when $2r_0 < d\leq 2r_0 +\delta$ and that
 $$\rho_{S^3} (\exp_{q_1} (d\mathbf{n}(q_1)),\Sigma_{r_0} )=|r-r_0|, \quad \forall d\in [0, 2r_0 +\delta].$$

We can assume $(H+\lambda ) (q_1)\geq (-H+\lambda)(q_2)$, for otherwise we will get $(H+\lambda)(q_1)<(-H+\lambda)(q_2)$, and from the mean convexity assumption, we have
$$(H+\lambda)(q_2)\geq (-H+\lambda)(q_2)>(H+\lambda)(q_1)\geq (-H+\lambda) (q_1).$$
Then we can simply switch $q_1$ and $q_2$.

In a coordinate neighborhoods of $q_1$  and $q_2$, we can locally define two functions respectively :
\begin{align*}
g_d (x_1,y_1)&= \langle\cos d X(q_1)+\sin d  \mathbf{n}(q_1) ,\cos {r_0} X(x_1,y_1)+\sin {r_0} n(x_1,y_1) \rangle,\\
h_d (x_2,y_2)&= \langle\cos d X(q_1)+\sin d  \mathbf{n}(q_1) ,\cos {r_0} X(x_2,y_2)+\sin {r_0} n(x_2,y_2) \rangle,\quad d \geq 0
\end{align*}
Then when $0\leq d \leq 2 r_0+\delta $, $g_d $ achieves its maximum at $(0,0)$ (that is, $q_1$).
By \eqref{eqn:derived}, when $0<d \leq 2 r_0+\delta $
$$ 0\geq (g_d)_ {x_1 x_1} (q_1)=2F(q_1) (-\cos d +(H+\lambda )(q_1) \sin d )(\cos r_0 -(H+\lambda )(q_1) \sin r_0 ),$$
that is,
$$ \frac{\cos d}{ \sin d} \geq (H+\lambda) (q_1) ,\quad 0<d \leq 2 r_0 +\delta,\quad r_0<\frac{\pi}{4}.$$

Since $(H+\lambda )(q_1) )\geq (-H+\lambda)  (q_2)$, we can choose $2r_0 <r'<\frac{\pi}{2} +\delta'$ $(\delta'>0$ is small) such that 
$$r'>\cot^{-1} ((H+\lambda)(q_1)),\quad 2r_0+\cot^{-1} ((-H+\lambda) (q_2) )>r'.$$
Hence by \eqref{eqn:derived} again,
\begin{align*}
(g_{r'})_{x_1 x_1} (q_1)&= 2F(q_1) (-\cos r' +(H+\lambda) (q_1) \sin r' )(\cos r_0 -(H+\lambda) (q_1) \sin r_0 )>0,\\
(h_{r'} )_{x_2} (q_2)&=(h_{r'} )_{y_2} (q_2) =0=( h_{r'} )_{x_2 y_2} (q_2),\\
 (h_{r'} )_{x_2 x_2} (q_2)&=2F'(q_2)\langle\cos r' X(q_1)+\sin r' \mathbf{n}(q_1),-X(q_2)+(H+\lambda) (q_2) \mathbf{n}(q_2) \rangle (\cos r_0 -\lambda (q_2) \sin r_0 )<0,\\
 (h_{r'} )_{y_2 y_2} (q_2)&=2F'(q_2)\langle\cos r' X(q_1)+\sin r' \mathbf{n}(q_1),-X(q_2)+(H-\lambda)  (q_2) \mathbf{n}(q_2) \rangle (\cos r_0 -\lambda (q_2) \sin r_0 )<0,
\end{align*}
where  we use the condition: $\cos r' X(q_1)+\sin r'  \mathbf{n}(q_1) =\cos (r'-2r_0) X(q_2)-\sin (r'-2r_0) \mathbf{n}(q_2)$ and $\frac{\cos (r'-2r_0)}{\sin (r'-2r_0)}>(-H+\lambda)(q_2)$.
  
 Hence there exists a point $q_1'$ near $q_1$ such that  
 \begin{align*}
 &\quad \rho_{S^3} (\cos r' X(q_1)+\sin r'  \mathbf{n}(q_1),\cos r_0 X(q_1')+\sin r_0 \mathbf{n}(q_1') )\\
 &<r'-r_0=\rho_{S^3} (\cos r' X(q_1)+\sin r'  \mathbf{n}(q_1),\cos r_0 X(q_1)+\sin r_0 \mathbf{n}(q_1) ),
 \end{align*}
 and for any point $q\neq q_2$ near $q_2$,
\begin{align*}
&\quad \rho_{S^3} \cos r' X(q_1)+\sin r'  \mathbf{n}(q_1),\cos r_0 X(q)+\sin r_0 \mathbf{n}(q) )\\
&>r'-r_0=\rho_{S^3}(\cos r' X(q_1)+\sin r'  \mathbf{n}(q_1),\cos r_0 X(q_0)+\sin r_0 \mathbf{n}(q_0) ).
\end{align*} 

We claim that from these we can arrive at a contradiction.

In fact, we can choose a small neighborhood $U\subseteq \Sigma $ of $q_2$ and $\epsilon>0$ such that: 
$$\gamma (x_2,y_2,d) =\cos d\ X(x_2,y_2)+\sin d\ n(x_2,y_2):U\times (r_0-\epsilon,r_0+\epsilon )\rightarrow  \gamma (U\times (r_0-\epsilon,r_0+\epsilon ) )\subseteq \Omega_1 $$ is  a  diffeomorphism.

When we choose $q_1'$ close to  $q_1$ such that $\cos r_0 X(q_1')+\sin r_0 \mathbf{n}(q_1') \in  \gamma (U\times (r_0-\epsilon,r_0+\epsilon ) )$,  clearly, the point $\cos r_0 X(q_1')+\sin r_0 \mathbf{n}(q_1') \in \gamma (U\times [r_0,r_0+\epsilon ) ) $ by the definition of cut points.

Since $q_1'$ is close to $q_1$, in the geodesic joining $\cos r' X(q_1)+\sin r' \mathbf{n}(q_1) $ and $\cos r_0 X(q_1')+\sin r_0 \mathbf{n}(q_1')$, there exists a point $a \in \gamma (U\times (r_0-\epsilon,r_0 ) ) $ and the segment joining $a$ and $\cos r_0 X(q_1')+\sin r_0 \mathbf{n}(q_1')$ is contained in $\gamma (U\times (r_0-\epsilon,r_0+\epsilon ) )$.

We can clearly see that the segment joining $a$ and $\cos r_0 X(q_1')+\sin r_0 \mathbf{n}(q_1')$ must intersect the set  $\gamma (U\times r_0 )$ at a point $a' \neq \cos r_0 X(q_1)+\sin r_0 \mathbf{n}(q_1)$.

That is to say, the time when the geodesic joining $\cos r' X(q_1)+\sin r' \mathbf{n}(q_1) $ and $\cos r_0 X(q_1')+\sin r_0 \mathbf{n}(q_1')$  passes through $a'$ is earlier  than or equal to the time it passes through $\cos r_0 X(q_1')+\sin r_0 \mathbf{n}(q_1')$.

However, from the above, we know that 
$$\rho_{S^3}  (\cos r' X(q_1)+\sin r'  \mathbf{n}(q_1),\cos r_0 X(q_1')+\sin r_0 \mathbf{n}(q_1') )<r'-r_0 $$
and 
$$\rho_{S^3} (\cos r' X(q_1)+\sin r'  \mathbf{n}(q_1),\cos r_0 X(q_1)+\sin r_0 \mathbf{n}(q) ) > r'-r_0 ,$$
so we get a contradiction.

The rest of the theorem can be obtained  from Lemma \ref{lem:lambda_max}.
\end{proof}

\begin{remark}
The same proof works for closed and strictly mean convex hypersurfaces in $\mathbb{R}^3$.
\end{remark}

For embedded minimal surface $\Sigma$, both regions $\Omega_1,\Omega_2$ have mean convex boundaries, hence the normal injectivity radii with respect to $\Omega_1$ and $\Omega_2$ are same.And from Theorem \ref{thm:radius} we get:

\begin{corollary}\label{coro:r_0}
For minimal embedding $X:\Sigma\rightarrow S^3$, $r_0$ has a lower bound depending only on the genus $g$  of $\Sigma$. Moreover,if $g=0$, $r_0=\frac{\pi}{2}$; if $g=1$, $r_0=\frac{\pi}{4}$; and if $g>1$, $r_0<\frac{\pi}{4}$.
\end{corollary}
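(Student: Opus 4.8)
The plan is to specialize the identity $r_0 = \cot^{-1}\big((H+\lambda)_{max}\big)$ of Theorem \ref{thm:radius} to the minimal case and then pin down $\lambda_{max} := \max_\Sigma \lambda$ in each genus. Since $X$ is minimal, $H \equiv 0$, so the principal curvatures are $\pm\lambda$, the boundary $\Sigma$ of each of $\Omega_1,\Omega_2$ is trivially mean convex, and Theorem \ref{thm:radius} gives $r_0 = \cot^{-1}(\lambda_{max}) \le \frac{\pi}{2}$, with $r_0 \le \frac{\pi}{4}$ once $g \ge 1$. Everything therefore reduces to controlling $\lambda_{max}$.

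For the genus-dependent lower bound I would invoke the curvature estimate of Choi and Schoen \cite{ChSch} recalled in the introduction: for an embedded minimal surface of genus $g$ in $S^3$ one has $\lambda_{max} \le C(g)$ for some constant depending only on $g$. Since $\cot^{-1}$ is strictly decreasing, this yields $r_0 = \cot^{-1}(\lambda_{max}) \ge \cot^{-1}(C(g)) > 0$, a lower bound depending only on $g$.

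The two exceptional values follow from the classification results quoted in the introduction. For $g = 0$, Calabi's theorem \cite{Ca} forces $\Sigma$ to be a totally geodesic $S^2$, so $\lambda \equiv 0$ and $r_0 = \cot^{-1} 0 = \frac{\pi}{2}$. For $g = 1$, Brendle's theorem \cite{Bre} forces $\Sigma$ to be congruent to the Clifford torus, whose principal curvatures are $\pm 1$; hence $\lambda \equiv 1$ and $r_0 = \cot^{-1} 1 = \frac{\pi}{4}$.

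For $g > 1$ it remains to promote the bound $r_0 \le \frac{\pi}{4}$ to a strict inequality, i.e.\ to show $\lambda_{max} > 1$. I would argue by contradiction: if $\lambda \le 1$ everywhere on $\Sigma$, then the Gauss equation $K = 1 + (H+\lambda)(H-\lambda) = 1 - \lambda^2$ gives $K \ge 0$ pointwise, hence $\int_\Sigma K\, d\sigma \ge 0$, contradicting Gauss-Bonnet, which gives $\int_\Sigma K\, d\sigma = 4\pi(1-g) < 0$. Thus $\lambda_{max} > 1$ and $r_0 = \cot^{-1}(\lambda_{max}) < \cot^{-1} 1 = \frac{\pi}{4}$. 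There is essentially no hard step here once Theorem \ref{thm:radius} is available; the only points requiring a little care are to quote the Choi--Schoen estimate and the Calabi/Brendle classifications in precisely the forms used, and to verify that the Clifford torus indeed has $\lambda \equiv 1$ in the normalization of Section 2.
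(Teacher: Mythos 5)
Your proposal is correct and follows essentially the same route as the paper: specialize Theorem~\ref{thm:radius} to get $r_0=\cot^{-1}\lambda_{max}$, invoke Choi--Schoen for the genus-dependent upper bound on $\lambda_{max}$, and use the Calabi/Brendle classifications for $g=0,1$. The one place you add detail is the $g>1$ case, where the paper merely asserts $\lambda_{max}>1$; your Gauss--Bonnet contradiction ($\lambda\le 1$ everywhere would force $K=1-\lambda^2\ge 0$, incompatible with $\int_\Sigma K\,d\sigma=4\pi(1-g)<0$) supplies the missing justification correctly.
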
 

\begin{proof}
For minimal embedding, we get $r_0=\cot^{-1} \lambda_{max}$. On the other hand, by \cite {ChSch}, we know that $\lambda_{max}$ has a uniform upper bound depending only on $g$. So $r_0$ has  a uniform lower bound depending only on $g$.
For $g=0,1$, since the embedding is explicit by \cite{Ca} and \cite{Bre}, we get the conclusion by direct computations.
For $g>1$, we must have $\lambda_{max}>1$ and hence $r_0=\cot^{-1} \lambda_{max}<\frac{\pi}{4}$.
\end{proof}

\section{Proof of the Main Theorem}
														
First, we review briefly the argument of Choi-Wang\cite{ChW}. We solve the Dirichlet problem in $\Omega=\Omega_1$ or $\Omega_2$:
$$\triangle u=0,\quad u | _{\Sigma } =f,$$
where $\triangle_{\Sigma} f=-\lambda_1 f,f\neq 0 $.

By Bochner's formula,
$$ \triangle |\nabla u |^2=2 | D^2 u |^2 +4| \nabla u|^2.$$
Using integration by parts, we obtain:
$$\int_{\Sigma} g_{S^3} (D_{v} \nabla u,\nabla u) =\int_{\Omega} | D^2 u |^2 +2 \int_{\Omega} | \nabla u|^2,$$
where $v$ is the outer normal vector of $\Omega$.
Since $$g_{S^3} (D_{v} \nabla u,\nabla u) =u_{v}u_{vv}+\nabla f\cdot \nabla u_{v}-h_{v}(\nabla f,\nabla f),u_{vv}=-\triangle_{\Sigma} f=\lambda_{1} f,$$
where $h_{v} $ is the second fundamental form with respect to $v$.
Using integration by parts again:
$$ \lambda_1 \int_{\Omega}  | \nabla u |^2 =\frac{1}{2} \int_{\Omega} | D^2 u |^2+\int_{\Omega} | \nabla u|^2 +\frac{1}{2}
\int_{\Sigma} h_{v} (\nabla f,\nabla f).$$

Since $$\int_{\Sigma} h_{v} (\nabla f,\nabla f)=-\int_{\Sigma} h_{-v} (\nabla f,\nabla f),$$
we can choose $\Omega_{1}$ or $\Omega_2$ such that 
$$ \int_{\Sigma} h_{v} (\nabla f,\nabla f) \geq 0 \ or \ \int_{\Sigma} h_{-v} (\nabla f,\nabla f)\geq 0.$$
And we denote this region by $\Omega$. Hence 
\begin{equation}\label{eqn:ChW}
\lambda_1 \geq 1+ \frac{\int_{\Omega} | D^2u |^2 }{2\int_{\Omega}   | \nabla  u  |^2     } .
\end{equation}
This is the starting point of our proof.

\begin{proof}[Proof of the Main Theorem]
By\cite{SchYau} ,we know 
$$ | D^2 u |^2 \geq \frac{3}{2} | \nabla|\nabla u| |^2 .$$
Hence by Bochner's formula again,
$$| \nabla u |   \triangle | \nabla u |  \geq \frac{1}{2} | \nabla|\nabla u| |^2+2 | \nabla  u  |^2 .   $$
We choose a cut off function $\eta$ with $0\leq \eta \leq 1$ and $\eta|_{\Sigma} =0 $ ,then 
$$\int_{\Omega} | \nabla|\nabla u| |^2\geq \int_{\Omega} \eta^2 | \nabla|\nabla u| |^2=-\int_{\Omega} \eta^2 | \nabla u | \triangle | \nabla u | -2\int_{\Omega} \eta | \nabla u | \nabla \eta \cdot \nabla | \nabla u |.$$
Thus
$$\int_{\Omega} \eta^2 | \nabla|\nabla u| |^2 \leq -\frac{1}{2} \int_{\Omega} \eta^2 | \nabla|\nabla u| |^2- 2\int_{\Omega}\eta^2 | \nabla u | ^2+2\sqrt{ \int_{\Omega}\eta^2 | \nabla | \nabla u | | ^2\int_ {\Omega}  | \nabla\eta |^2 | \nabla u | ^2 },$$
and hence
$$\sqrt{\int_{\Omega} \eta^2 | \nabla|\nabla u| |^2 } \geq \frac{2}{3} (\sqrt{ \int_{\Omega} | \nabla\eta |^2 | \nabla u | ^2 }-\sqrt{  \int_{\Omega} | \nabla\eta |^2 | \nabla u | ^2-3 \int_{\Omega}\eta^2 | \nabla u | ^2} ).$$
Equivalently,
$$\int_{\Omega} \eta^2 | \nabla|\nabla u| |^2  \geq \frac{ ( \int_{\Omega}\eta^2 | \nabla u | ^2)^2  } { \int _{\Omega}                  | \nabla \eta |^2 | \nabla u |^2  }.$$
Combining with \eqref{eqn:ChW}, we get
\begin{equation}\label{eqn:lambda_1}
    \lambda_1 \geq 1+\frac{3}{4} \frac{ ( \int_{\Omega}\eta^2 | \nabla u | ^2)^2           }{ \int _{\Omega}                  | \nabla \eta |^2 | \nabla u |^2  \int_{\Omega}   | \nabla  u  |^2  }   .
    \end{equation}

In the following, we use $l$ to denote the distance to $\Sigma$. We choose $\eta$ such that $\eta=\frac{l}{ \epsilon r_0 } (0<\epsilon <1)$ for $ 0\leq l\leq \epsilon r_0 $ and equals to 1 elsewhere. Also choose $\theta$ such that $\theta =\frac{l}{ \rho  r_0 } (0<\rho <1)$ for $ 0\leq l\leq \rho r_0 $ and equal to 1 elsewhere.

From now on, we always assume that the genus $g>1$. Then by Corollary \ref{coro:r_0}, we only need to work under the extra condition $r_0<\frac{\pi}{4}$.

Since the Dirichlet integral of harmonic functions are minimal among functions with fixed boundary values, and the volume element of the region $\{0\leq  l \leq \epsilon r_0\} $ can be represented by $(\cos^2 l-\lambda^2 \sin^2 l ) d\sigma$ ($d\sigma$ is the volume element of $\Sigma$),  we have

$$\begin{aligned}
\int_{\Omega}   | \nabla  u  |^2 &\leq \int_{l\leq \epsilon r_0} | \nabla ((1-\theta ) f) |^2 \\
&= \int_{0} ^{\rho  r_0} dl \int_{\Sigma} [\frac{1}{\rho^2 r_0 ^2} f^2+(1-\frac{l}{\rho r_0 })^2 | \nabla f |^2 ] (\cos^2 l-\lambda^2 \sin ^2 l) d\sigma\\
&=\frac{1}{\rho^2 {r_0}^2}  \int_{0} ^{\rho r_0} dl \int_{\Sigma}  f^2(\cos^2 l-\lambda^2 \sin ^2 l) d\sigma\\
&\quad + \int_{0} ^{\rho r_0} dl \int_{\Sigma} (1-\frac{l}{\rho r_0 })^2 [ \frac{\cos l+\lambda \sin l  }{\cos l-\lambda \sin l} \frac{1}{2F} (\frac{\partial f}{\partial x})^2+\frac{\cos l-\lambda \sin l}{\cos l+\lambda \sin l} \frac{1}{2F} (\frac{\partial f}{\partial y})^2 ]  d\sigma\\
&\leq \frac{1}{\rho^2 {r_0}^2}  \int_{0} ^{\rho r_0}  \cos ^2 l dl
\int_{\Sigma} f^2 d\sigma +  \int_{0} ^{\rho r_0} (1-\frac{l}{\rho r_0 })^2\frac{\sin (r_0+l)}{\sin (r_0-l) } dl \int _{\Sigma}   | \nabla_{\Sigma} f |^2   d\sigma \\
&\leq  [\frac{1}{\rho r_0 } +\frac{\rho (1+\rho) }{3(1-\rho)} r_0 \lambda_1 ] \int_{\Sigma} f^2 d\sigma,
\end{aligned}$$
where we view $f$ as a function in  the region $\{0\leq  l\leq \rho r_0\} $ and use $\frac{\cos r_0}{\sin r_0} \geq \lambda $ and $\triangle_{\Sigma} f=-\lambda_1 f $.
Then 
$$\int _{\Omega} | \nabla \eta |^2 | \nabla u |^2\leq \frac{1}{\epsilon^2 r_0 ^2} \int_{\Omega}   | \nabla  u  |^2
\leq \frac{1}{\epsilon^2 r_0 ^2}  [\frac{1}{\rho r_0 } +\frac{\rho (1+\rho) }{3(1-\rho)} r_0 \lambda_1 ] \int_{\Sigma} f^2 d\sigma.$$
Since 
$$\int_{\Omega}\eta^2 | \nabla u | ^2 \geq \int_{l\geq \epsilon r_0} | \nabla u | ^2,$$
we can plug the above results into \eqref{eqn:lambda_1} to get
\begin{equation}\label{eqn:lambda1_final}
    \lambda_1 \geq 1+\frac{3}{4} [\frac{\epsilon \rho {r_0}^2}{1 +\frac{\rho^2 (1+\rho) }{3(1-\rho)} {r_0}^2 \lambda_1 } \frac{\int_{l\geq \epsilon r_0} | \nabla u | ^2}{\int_{\Sigma} f^2 d\sigma }]^2 .
    \end{equation}

In the following,we need to estimate the term $\frac{\int_{l\geq \epsilon r_0} | \nabla u | ^2}{\int_{\Sigma} f^2 d\sigma }$.

We need the following lemma on the first nonzero Steklov eigenvalue:
\begin{lemma}
Under the assumptions of the main theorem, the first nonzero Steklov eigenvalue $\mu_1$ of the region $\{l\geq \epsilon r_0\} $ satisfies:
\begin{equation}\label{eqn:Steklov_final}
\mu_1\geq  \sup_{\epsilon'\in (\epsilon, 1)}\frac{1-\epsilon'}{1+\epsilon '} \sqrt{\lambda_1}
 \tanh (\frac{ \sqrt{\lambda_1 } (\epsilon'-\epsilon) r_0 }{2} ).
 \end{equation}
\end{lemma}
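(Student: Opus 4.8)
The plan is to compare the region $\Omega_\epsilon := \{l \geq \epsilon r_0\}$ with a cylinder-like model using the Steklov eigenvalue estimate of Colbois--Soufi--Girouard \cite{CSG} for product metrics. The key geometric fact, already recorded in the computation above, is that the collar neighborhood $\{0 \leq l < r_0\}$ carries the metric $g_{S^3} = dl^2 + (\cos l + \lambda \sin l)^2\,(\cdots) + (\cos l - \lambda \sin l)^2\,(\cdots)$ along the principal directions; in particular, on the ``slab'' $\{\epsilon r_0 \leq l \leq \epsilon' r_0\}$ the metric is uniformly comparable to a product metric $\Sigma \times [\epsilon r_0, \epsilon' r_0]$, with comparison constants controlled by the ratio $\frac{\cos l + \lambda\sin l}{\cos l - \lambda\sin l} \leq \frac{\sin(r_0+l)}{\sin(r_0-l)}$ (using $\cot r_0 \geq \lambda$), which is bounded on any $[\,0, \epsilon' r_0\,]$ with $\epsilon' < 1$. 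So the first step is to make this conformal/bilipschitz comparison precise on the slab and invoke \cite{CSG} to estimate the first Steklov eigenvalue of the slab from below.

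Second, I would recall the variational characterization: $\mu_1(\Omega_\epsilon) = \inf \left\{ \frac{\int_{\Omega_\epsilon} |\nabla w|^2}{\int_{\partial \Omega_\epsilon} w^2} : \int_{\partial\Omega_\epsilon} w = 0 \right\}$, where $\partial \Omega_\epsilon = \Sigma_{\epsilon r_0}$. The point of introducing $\epsilon'$ is that the true region $\Omega_\epsilon$ contains the slab $S := \{\epsilon r_0 \leq l \leq \epsilon' r_0\}$, and on $S$ the Dirichlet-to-Neumann-type spectral quantity can be estimated by the product model $\Sigma \times [\epsilon r_0, \epsilon' r_0]$, for which \cite{CSG} gives (for the lowest cross-sectional frequency, which here is governed by $\lambda_1$, the first nonzero Laplace eigenvalue of $\Sigma$) a first Steklov eigenvalue bounded below by $\sqrt{\lambda_1}\tanh\!\big(\tfrac{\sqrt{\lambda_1}\,(\epsilon'-\epsilon)r_0}{2}\big)$ — this is exactly the Steklov eigenvalue of a Euclidean-type cylinder of length $(\epsilon'-\epsilon)r_0$ with cross-section of frequency $\sqrt{\lambda_1}$, since separating variables for the cylinder $[0,L]$ gives harmonic functions $\cosh(\sqrt{\lambda_1}(t - L/2))\phi$ with Neumann data $\sqrt{\lambda_1}\tanh(\sqrt{\lambda_1}L/2)$ on the boundary. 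The factor $\frac{1-\epsilon'}{1+\epsilon'}$ then absorbs the bilipschitz distortion between the true slab metric and the product metric over $[0,\epsilon' r_0]$, coming from the bound $\cos l \pm \lambda \sin l$ lying between $\cos(\epsilon' r_0)$-type quantities and $1$; taking the supremum over admissible $\epsilon' \in (\epsilon,1)$ optimizes the length-versus-distortion tradeoff.

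Third, I would pass from the slab estimate to $\Omega_\epsilon$ by a monotonicity/extension argument: given a test function $w$ on $\Omega_\epsilon$ with boundary average zero, either restrict attention to $S$ and use that $\int_{\Omega_\epsilon}|\nabla w|^2 \geq \int_S |\nabla w|^2$, or extend the minimizer from $S$ by the appropriate constant/harmonic extension into $\{l \geq \epsilon' r_0\}$; the boundary $\partial\Omega_\epsilon$ coincides with one face of the slab, so the Rayleigh quotient on $\Omega_\epsilon$ dominates that on $S$ with Steklov condition on the face $\{l = \epsilon r_0\}$ and Neumann (free) condition on $\{l = \epsilon' r_0\}$, which is precisely the ``mixed Steklov'' problem on the cylinder solved above.

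The main obstacle I anticipate is making the bilipschitz comparison quantitatively sharp enough to produce exactly the clean factor $\frac{1-\epsilon'}{1+\epsilon'}$: one must carefully track how the two principal-direction conformal factors $(\cos l + \lambda\sin l)$ and $(\cos l - \lambda\sin l)$ distort both the Dirichlet energy $\int |\nabla w|^2$ (which sees the full metric) and the boundary integral $\int_{\partial} w^2$ (which sees only the induced metric on $\Sigma_{\epsilon r_0}$), and show the worst-case ratio over the slab is controlled by evaluating at $l = \epsilon' r_0$ using $\cot r_0 \geq \lambda$, i.e. $\frac{\cos(\epsilon' r_0) - \lambda \sin(\epsilon' r_0)}{\cos(\epsilon' r_0) + \lambda\sin(\epsilon' r_0)} \geq \frac{\sin((1-\epsilon')r_0)}{\sin((1+\epsilon')r_0)} \geq \frac{(1-\epsilon')}{(1+\epsilon')} \cdot (\text{const})$, where the last step uses concavity of $\sin$ on $[0,\pi]$ together with $r_0 < \pi/4 < \pi/2$. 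Handling the possibly non-smooth/non-constant nature of $\lambda$ and the fact that the collar decomposition only gives the metric in principal coordinates pointwise (not in a single global chart) requires working with the intrinsic volume and energy densities rather than explicit coordinates, but since every quantity needed is a ratio of metric coefficients this can be done pointwise and then integrated.
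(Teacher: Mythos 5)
Your overall strategy is essentially the paper's: restrict the Steklov eigenfunction's harmonic extension to the annular slab $S=\{\epsilon r_0\le l\le\epsilon'r_0\}$, use $\cot r_0\ge\lambda$ to compare the collar metric's energy density to the product metric up to the factor $\frac{1-\epsilon'}{1+\epsilon'}$, and invoke the product-cylinder Steklov calculation. Two points, one cosmetic and one substantive.

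Cosmetic: you describe reducing to a \emph{mixed} Steklov--Neumann problem on the slab (Steklov on $\{l=\epsilon r_0\}$, free on $\{l=\epsilon'r_0\}$), yet quote the \emph{two-sided} Steklov formula $\sqrt{\lambda_1}\tanh\!\bigl(\tfrac{\sqrt{\lambda_1}L}{2}\bigr)$ from the eigenfunction $\cosh\!\bigl(\sqrt{\lambda_1}(t-L/2)\bigr)$, which does not satisfy Neumann at $t=L$. The mixed problem's first nonzero eigenvalue is in fact $\sqrt{\lambda_1}\tanh(\sqrt{\lambda_1}L)$, which is larger, so your conclusion still holds; but the paper actually uses the two-sided Steklov problem on $[\epsilon r_0,\epsilon' r_0]\times\Sigma$ and cites \cite{CSG} for its spectrum, so if you want the mixed version you should compute it directly rather than cite \cite{CSG}.

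Substantive gap: you never reconcile the constraint and the boundary measure. The Steklov eigenfunction $g$ of $\Omega_\epsilon$ satisfies $\int_{l=\epsilon r_0}g\,d\sigma_{\epsilon r_0}=0$, where $d\sigma_{\epsilon r_0}=(\cos^2(\epsilon r_0)-\lambda^2\sin^2(\epsilon r_0))\,d\sigma$ is the induced measure on $\Sigma_{\epsilon r_0}$, whereas the cylinder's Rayleigh quotient is taken over functions with zero mean with respect to the \emph{product} measure $d\sigma$, and its boundary consists of \emph{both} faces (in the paper's two-sided version) or one face with measure $d\sigma$ (in your mixed version). Restricting $v$ to the slab therefore does not hand you an admissible test function for the model problem, and the boundary integrals $\int g^2\,d\sigma_{\epsilon r_0}$ and $\int(\cdot)^2\,d\sigma$ are not the same. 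The paper fixes this by subtracting a constant $c$ chosen so that the combined boundary trace of $v-c$ has zero $d\sigma$-mean on the cylinder boundary, and then using $\int_{l=\epsilon r_0}g\,d\sigma_{\epsilon r_0}=0$ together with $d\sigma_s\le d\sigma$ to show the cylinder boundary integral of $(v-c)^2$ dominates $\int g^2\,d\sigma_{\epsilon r_0}$. This step is not an afterthought; without it the inequality between the two Rayleigh quotients does not follow. Your proposal should include an analogous normalization.
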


\begin{proof}
First we solve the Dirichlet problem in $\{l\geq \epsilon r_0\}$ :
$$\Delta v=0,\quad v|_{l=\epsilon r_0} =g,$$
where $g$ is the eigenfunction of the Dirichlet to Neumann operator with respect to $\mu_1$, that is $-v_l=\mu_1 g,  g\neq 0$.

For any $\epsilon<\epsilon'<1$, we have
 \begin{equation}\label{eqn:Steklov}
\begin{aligned}
\mu_1 & =\frac{\int_{l\geq \epsilon r_0}  | \nabla v  | ^2}{ \int_{l=\epsilon r_0} g^2 d\sigma_{\epsilon r_0} } \geq \frac{\int_{ \epsilon r_0 \leq l \leq \epsilon' r_0}  | \nabla v  | ^2}{ \int_{l=\epsilon r_0} g^2 d\sigma_{\epsilon r_0} }\\
&=\frac{1}{\int_{l=\epsilon r_0} g^2 d\sigma_{\epsilon r_0}} 
\int_{ \epsilon r_0 } ^{\epsilon' r_0}  dl \int_{\Sigma}   [(\frac{\partial v}{\partial l})^2 (\cos^2 l-\lambda^2 \sin ^2 l )  \\
&\quad +\frac{\cos l+\lambda \sin l  }{\cos l-\lambda \sin l} \frac{1}{2F} (\frac{\partial v}{\partial x})^2+\frac{\cos l-\lambda \sin l}{\cos l+\lambda \sin l} \frac{1}{2F} (\frac{\partial v}{\partial y})^2 ]  d\sigma   \\
&\geq \frac{1}{\int_{l=\epsilon r_0} g^2 d\sigma_{\epsilon r_0}} \int_{ \epsilon r_0 }^{\epsilon' r_0}  dl \int_{\Sigma}   [\frac{\sin (r_0 +l)\sin (r_0 - l)}{\sin^2 r_0} (\frac{\partial v}{\partial l})^2 \\
&\quad +\frac{\sin(r_0-l ) }{\sin (r_0+l) }  | \nabla_{\Sigma} v |^2 ]  d\sigma \\
 &\geq \frac{1-\epsilon'}{1+\epsilon'} \frac{\int_{ \epsilon r_0 }^{\epsilon' r_0}  dl \int_{\Sigma}   [ (\frac{\partial v}{\partial l})^2 + | \nabla_{\Sigma } v |^2 ]  d\sigma  } { \int_{l=\epsilon r_0} g^2 d\sigma_{\epsilon r_0} },
 \end{aligned}
 \end{equation}
where we use the fact
$$\frac{\cos r_0}{\sin r_0} \geq \lambda,\quad \frac{\sin (r_0 +l)\sin (r_0 - l)}{\sin^2 r_0} >\frac{\sin(r_0-l ) }{\sin (r_0+l) } >\frac{1-\epsilon'}{1+\epsilon'},$$
  and $d\sigma_{s} =(\cos^2 s-\lambda^2\sin^2 s) d\sigma (0\leq s <r_0) $.
 
If we consider the cylinder $[\epsilon r_0,\epsilon' r_0]\times \Sigma $ with  product metric,then we can choose a constant $c $ such that 
$$\int_{\Sigma} (g-c)d\sigma +\int_{\Sigma} (v|_{l=\epsilon'r_0} -c) d\sigma =0.$$
Since 
$$\int_{l=\epsilon r_0} g d\sigma_{\epsilon r_0} =0,$$
we have
$$\int_{l=\epsilon r_0} (g-c) ^2d\sigma_{\epsilon r_0 }  +\int_{l=\epsilon'r_0} (v -c)^2 d\sigma_{\epsilon' r_0} \geq  \int_{l=\epsilon r_0} g^2 d\sigma_{\epsilon r_0},$$
and 
$$\int_{l=\epsilon r_0} (g-c) ^2d\sigma_{\epsilon r_0 }  +\int_{l=\epsilon'r_0} (v -c)^2 d\sigma_{\epsilon' r_0}  \leq \int_{\Sigma } (g-c) ^2d\sigma  +\int_{\Sigma } (v|_{l=\epsilon'r_0} -c)^2 d\sigma.$$
Plugging into \eqref{eqn:Steklov} and use the definition of the first Steklov eigenvalue:
 $$\mu_1\geq \frac{1-\epsilon'}{1+\epsilon '} \mu_1 |_{ [\epsilon r_0,\epsilon' r_0]\times \Sigma } .$$          
By \cite{CSG} and $r_0<\frac{\pi}{4} $,we know the first Steklov eigenvalue of $ [\epsilon r_0,\epsilon' r_0]\times \Sigma $ is 
$$\sqrt{\lambda_1}
\tanh (\frac{ \sqrt{\lambda_1 } (\epsilon'-\epsilon ) r_0      }{2} ).$$
So we get
$$\mu_1 \geq \frac{1-\epsilon'}{1+\epsilon '} \sqrt{\lambda_1}
\tanh (\frac{ \sqrt{\lambda_1 } (\epsilon'-\epsilon) r_0) }{2} ).$$

\end{proof}

Now we come back to the proof of main theorem. Combining the above lemma with \eqref{eqn:lambda1_final}, we get for any $\epsilon'\in (\epsilon,1)$:

\begin{equation}\label{eqn:lambda-mu}
    \lambda_1 \geq 1+\frac{3}{4} [\frac{\epsilon \rho {r_0}^2}{1 +\frac{\rho^2 (1+\rho) }{3(1-\rho)} {r_0}^2 \lambda_1 } 
 \frac{1-\epsilon'}{1+\epsilon '} \sqrt{\lambda_1}
\tanh (\frac{ \sqrt{\lambda_1 } (\epsilon'-\epsilon) r_0) }{2} ) 
\frac{\int_{l= \epsilon r_0} (u-\phi (\epsilon r_0) )^2 d\sigma_{\epsilon r_0}   }{\int_{\Sigma} f^2 d\sigma }]^2,
\end{equation}
where $\phi(s) $ is a function such that 
$$\int_{l= s} (u-\phi (s ) ) d\sigma_{s} =0,\quad \phi (0)=0.$$
We need to estimate the term
$$\frac{\int_{l= \epsilon r_0} (u-\phi (\epsilon r_0 ) )^2 d\sigma_{\epsilon r_0}   }{\int_{\Sigma} f^2 d\sigma  } .$$

From the above estimate on $\int_{\Omega} | \nabla u |^2$ and $\frac{\cos r_0}{\sin r_0}\geq \lambda$, $\forall 0\leq s\leq \epsilon r_0$, we have
$$\begin{aligned} 
\frac{\partial }{\partial s}  \int_{l=  s} (u -\phi (s) )^2 d\sigma_{s} & =2\int_{r=  s} (u-\phi(s))( u_l -\phi'(s) ) d\sigma_{s} \\
& \quad-\sin (2s) \int_{\Sigma} (1+\lambda^2) (u|_{r=s}-\phi(s))^2 d\sigma \\
&\geq-2\int_{\Omega} | \nabla u |^2 -\sin (2s) \int_{\Sigma} (1+\lambda^2) (u|_{l=s}-\phi(s) )^2 d\sigma\\ 
&\geq - 2[\frac{1}{\rho r_0 } +\frac{\rho(1+\rho) }{3(1-\rho)} r_0 \lambda_1 ] \int_{\Sigma} f^2 d\sigma \\
&\quad-\frac{\sin(2s) }{\sin (r_0+s) \sin (r_0-s) }  \int_{l=s} (u -\phi (s) )^2 d\sigma_{s}  \\
&\geq - 2[\frac{1}{\rho r_0 } +\frac{\rho (1+\rho) }{3(1-\rho)} r_0 \lambda_1 ] \int_{\Sigma} f^2 d\sigma \\
&\quad-\frac{2\sin(\epsilon r_0 )}{\sin (1+\epsilon) r_0 \sin (1-\epsilon )r_0} \int_{l=s} (u -\phi (s) )^2 d\sigma_{s} .
\end{aligned} $$

Hence 
$$\begin{aligned}
&\frac{\partial }{\partial s}  [\exp ( {\frac{2\sin(\epsilon r_0 ) s}{\sin (1+\epsilon) r_0 \sin (1-\epsilon )r_0 } }) \int_{l=  s} (u -\phi (s) )^2 d\sigma_{s}]\\
& \geq  - 2[\frac{1}{\rho  r_0 } +\frac{\rho (1+\rho) }{3(1-\rho)} r_0 \lambda_1 ] \exp ( {\frac{2\sin(\epsilon r_0 ) s}{\sin [(1+\epsilon) r_0] \sin [(1-\epsilon )r_0] } })  \int_{\Sigma} f^2 d\sigma.
\end{aligned}
$$
Since $$\frac{\sin(\epsilon r_0 ) \epsilon r_0}{\sin [(1+\epsilon) r_0] \sin [(1-\epsilon )r_0] } 
=\frac{\epsilon}{1+\epsilon} \frac{\sin(\epsilon r_0 )}{\sin [(1-\epsilon )r_0] }\frac{(1+\epsilon) r_0}{\sin [(1+\epsilon) r_0] },$$
and $\frac{x}{\sin x}$ is increasing on $[0,\frac{\pi}{2}]$,$\frac{\sin (\epsilon x) }{\sin(1-\epsilon)x}\leq 1$ for any
$x \in [0,\frac{\pi}{4}] $ and any  $\epsilon \in [0,\frac{1}{2}]$, $$ \frac{\sin(\epsilon r_0 ) \epsilon r_0}{\sin [(1+\epsilon) r_0] \sin [(1-\epsilon )r_0] } \leq \frac{\pi}{6} ,\forall \epsilon \in [0,\frac{1}{2}].$$
Hence we  can choose small enough $\epsilon$  ( does not depend  on $r_0)$ when we fix $\rho$  and notice $1<\lambda_1\leq 2$) such that 
$$2\epsilon  [\frac{1}{\rho   } +\frac{\rho (1+\rho) }{3(1-\rho)} r_0 ^2 \lambda_1 ]  \exp ( {\frac{2\sin(\epsilon r_0 ) \epsilon r_0}{\sin (1+\epsilon) r_0 \sin (1-\epsilon )r_0 } }) <1 ,$$ 
and we have 
$$\int_{l=  \epsilon r_0 } (u-\phi (\epsilon r_0) )^2 d\sigma_{\epsilon r_0} >A
\int_{\Sigma} f^2 d\sigma, $$
where  $0<A<\frac{1}{2}$ is a small  positive constant independent of $r_0$. Hence by \eqref{eqn:lambda-mu}, when we also fix $\epsilon'>\epsilon$, we have 
$$\lambda_1 \geq 1+B {r_0}^6,$$
where $ B>0 $ is a small  constant independent of $r_0$. 	Since by corollary 1, we know $r_0$ has a lower bound depending only on the genus $g$, we complete the proof of the main theorem.
\end{proof}

\end{document}